\documentclass[reqno]{amsart}
\usepackage[T1]{fontenc}
\usepackage{amsmath, amssymb, amsthm, amsfonts}
\usepackage{mathrsfs}


\usepackage{lmodern}
\usepackage{fontawesome5}
\usepackage{booktabs}
\usepackage{caption}
\usepackage{float}

\usepackage{graphicx}
\usepackage[dvipsnames]{xcolor}
\usepackage{mdframed}
\usepackage{tikz-cd}

\usepackage{fancyhdr}
\usepackage[hang,flushmargin]{footmisc}

\usepackage{enumitem}
\usepackage{mathtools}

\usepackage[alphabetic, initials, msc-links]{amsrefs}
\makeatletter
\renewcommand{\BibLabel}{%
    \Hy@raisedlink{\hyper@anchorstart{cite.\CurrentBib}\hyper@anchorend}%
    [\thebib]%
}
\makeatother
\makeatletter
    \renewcommand{\MR}[1]{\ (\href{https://mathscinet.ams.org/mathscinet-getitem?mr=MR#1}{MR#1})}
\makeatother

\usepackage{thmtools}

\usepackage{hyperref}
\hypersetup{
    colorlinks=true,
    linkcolor=-Cerulean,
    filecolor=magenta,
    urlcolor=Cerulean,
    citecolor=Cerulean
}

\usepackage[capitalize, nameinlink]{cleveref}
\crefname{equation}{}{}
\crefname{prop}{Proposition}{Propositions}
\crefname{section}{\S}{}

\graphicspath{ {./Images/} }


\theoremstyle{definition}
\newtheorem{theorem}{Theorem}[section]

\newtheorem{definition}[theorem]{Definition}

\newtheorem*{remark}{Remark}


\DeclareMathOperator{\GL}{\textsf{GL}}
\DeclareMathOperator{\SL}{\textsf{SL}}
\DeclareMathOperator{\im}{\textsf{im}}


\title[Cuspidal Cohomology Computations]{Computations Directly on the Cuspidal Cohomology of Congruence Subgroups of $\mathrm{SL}(3, \mathbb{Z})$}
\author{Zachary Porat}
\address{Department of Mathematics and Computer Science, Wesleyan University, 265 Church Street, Middletown, Connecticut 06459}
\email{\href{mailto:zporat@wesleyan.edu}{\tt zporat@wesleyan.edu}}
\urladdr{\href{https://zporat.github.io}{\tt https://zporat.github.io}}

\subjclass{Primary 11F75, 11Y40}


\begin{document}

\begin{abstract}
    Ash, Grayson, and Green [J.\ Number Theory 19 (1984), pp.\ 412--436] compute the action of Hecke operators on a certain subspace of the cohomology of low-level congruence subgroups of $\mathsf{SL}(3, \mathbb{Z})$.  This subspace contains the cuspidal cohomology, which is of primary interest.  We extend their work, introducing a method that allows for computing the action of Hecke operators directly on the cuspidal cohomology.  Using this method, we obtain data for prime level less than 3500, finding seven additional levels at which nonzero cuspidal classes appear and calculating local factors for five of these levels.
\end{abstract}

\maketitle

\section{Introduction} \label{section:intro}

Let $G$ be a reductive Lie group, $X$ the associated symmetric space for $G$, and $\Gamma$ a discrete subgroup of $G$.  Automorphic forms arise in the cohomology of certain compactifications of $X/\Gamma$.  For example, consider the classical setting where $G = \GL(2)$, $X$ is the upper half plane, and $\Gamma = \Gamma_0(2, N)$.  One can study the space of weight two cusp forms $S_2(\Gamma_0(2, N))$ by instead investigating specific classes in the cohomology of the modular curve $X_0(N)$.

Ash, Grayson, and Green \cite{AGG84} build this theory for $G = \GL(3)$.  In this case, $X = \mathsf{SO}(3) \backslash \mathsf{SL}(3, \mathbb{R})$, where $\mathsf{SL}(3, \mathbb{R})$ acts on the right.  They consider the congruence subgroup
\[\Gamma_0(3, N) = \{ (a_{ij}) \in \SL(3, \mathbb{Z}) \colon a_{i1} \equiv 0 \pmod N \text{ for } i =2, 3\}. \]
Of particular interest are cuspidal automorphic forms.  Similar to the classical setting, a cuspidal automorphic form $f$ on $\GL(3)$ corresponds to a nonzero cuspidal cohomology class $u_f$ in the cohomology of the compactification of $X/\Gamma_0(3, N)$.  A complete discussion of this correspondence can be found in \cite{LS82}.

Let $u_f$ be an eigenclass for all Hecke operators $T_{A(\ell)}$ with complex eigenvalues $a_\ell$ for prime $\ell$.  In the $\GL(3)$ setting,
\[
    L_\ell(f, s) = (1 - a_\ell \ell^{-s} + \overline{a_\ell} \ell^{1-2s} - \ell^{3 - 3s})^{-1}
\]
is the local factor of the associated $L$-function for $\ell \neq p$, when $\Gamma_0(3, p)$ is the level of the form.  Alternatively, one can view this data through the lens of Galois representations, where $a_\ell$ is the trace of the Frobenius at $\ell$ for a certain 3-dimensional $\textsf{Gal}(\overline{\mathbb{Q}}/\mathbb{Q})$ representation and $p$ is the conductor of said representation.  We do not pursue this avenue, but discussions can be found in \cite{AM92} and \cite{vGT94}.

For prime level $p < 100$, Ash, Grayson, and Green \cite{AGG84} provide computations that can be used to calculate several of these local factors. In \cite{vGvdKTV97}, van Geemen, van der Kallen, Top, and Verberkmoes extend these computations, checking composite level $N < 260$ and prime level $p \leq 337$.  This paper returns to the prime level case, reporting additional computations of local factors for prime level $p < 2000$ (see \cref{table:2}).

We start by recalling the necessary background information from \cite{AGG84}.  Then, we present an alternate strategy for understanding the action of the Hecke algebra on the cuspidal cohomology.  In contrast to previous approaches, our strategy allows for computation of the Hecke operators directly on the cuspidal cohomology, as opposed to other larger spaces.

We conclude with our computational methods and results, detailing the advantages of this new strategy.  For prime level $p < 3500$, we found nonzero cuspidal classes at levels 521, 953, 1289, 1433, 1913, 2089, and 2833.  We were able to calculate eigenvalues for desired Hecke operators at levels 521, 953, 1289, 1433, and 1913.  However, computational constraints prevented us from finding the eigenvalues at levels 2089 and 2833.

\section{Preliminaries} \label{section:prelims}

\subsection{Vector Space Interpretation} \label{sec:vectorspace}
Let $G = \GL(3)$ and consider the symmetric space $X = \mathsf{SO}(3) \backslash \mathsf{SL}(3, \mathbb{R})$, where $\mathsf{SL}(3, \mathbb{R})$ acts on the right.  Henceforth, we let $\Gamma = \Gamma_0(3, p)$, where
\[\Gamma_0(3, p) = \{ (a_{ij}) \in \SL(3, \mathbb{Z}) \colon a_{i1} \equiv 0 \pmod p \text{ for } i =2, 3\}. \]
Note, we are restricting our attention to congruence subgroups of prime level $p$.

By work of Lee and Schwermer \cite{LS82}, there is a correspondence between cuspidal automorphic forms on $\GL(3)$ and nonzero cuspidal cohomology classes in $H^3(\mathcal{M}, \mathbb{C})$, where $\mathcal{M}$ denotes the Borel-Serre compactification of $X/\Gamma$.  We leverage this correspondence to study the space of cuspidal automorphic forms on $\textsf{GL}(3)$ by instead investigating a specific space of cohomology classes called the \textbf{cuspidal cohomology}
\[H^3_\text{cusp}(\Gamma, \mathbb{C}) = \{u_f \in H^3(\mathcal{M}, \mathbb{C}) \colon u_f \text{ with } f \text{ a cuspidal automorphic form}\}.\]
We note that the cohomology and homology of $\Gamma$ and $\mathcal{M}$ with complex coefficients are canonically isomorphic and can be identified, as done in \cite{AGG84}.

Ash, Grayson, and Green \cite{AGG84}*{Thm.\ 3.2, Prop.\ 3.12} establish that the homology $H_3(\Gamma, \mathbb{C})$, the dual space of $H^3(\Gamma, \mathbb{C})$, is isomorphic to a certain vector space of $\mathbb{C}$-valued functions on $\mathbb{P}^2(\mathbb{Z}/p\mathbb{Z})$.
\begin{theorem}[\cite{AGG84}*{Thm.\ 3.2, Prop.\ 3.12}] \label{thm:W}
    There exists an isomorphism of vector spaces $\Phi \colon W \to H_3(\Gamma, \mathbb{C})$, where $W$ denotes the vector space of functions $f \colon \mathbb{P}^2(\mathbb{Z}/p\mathbb{Z}) \to \mathbb{C}$ that satisfy the following properties:
\begin{enumerate}[label=(\roman*)]
    \item $f(x \colon y \colon z) = f(z \colon x \colon y) = f(-x \colon y \colon z) = -f(y \colon x \colon z)$;
    \item $f(x \colon y \colon z) + f(-y \colon x -y \colon z) + f(y -x \colon -x \colon z) = 0$.
\end{enumerate}
\end{theorem}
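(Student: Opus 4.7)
My plan is to reinterpret $H_3(\Gamma, \mathbb{C})$ via modular symbols for $\mathrm{SL}(3, \mathbb{Z})$. Since $\Gamma$ has virtual cohomological dimension $3$, Borel--Serre duality produces a canonical isomorphism
\[
H_3(\Gamma, \mathbb{C}) \cong H^0(\Gamma, \mathrm{St} \otimes \mathbb{C}) \cong (\mathrm{St} \otimes \mathbb{C})_\Gamma,
\]
where $\mathrm{St}$ is the Steinberg module of $\mathrm{SL}(3, \mathbb{Q})$. By Ash's presentation, $\mathrm{St} \otimes \mathbb{Q}$ is generated by \emph{unimodular symbols} $[v_1, v_2, v_3]$---triples of primitive integral vectors forming a $\mathbb{Z}$-basis of $\mathbb{Z}^3$---subject to column permutation with appropriate signs together with a closed-cycle relation. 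My first goal is to compute the $\Gamma$-coinvariants of this presentation directly.

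Via Shapiro's lemma, $(\mathrm{St} \otimes \mathbb{C})_\Gamma \cong (\mathrm{St} \otimes \mathbb{C}[\Gamma \backslash \mathrm{SL}(3, \mathbb{Z})])_{\mathrm{SL}(3, \mathbb{Z})}$. The map $g \mapsto (\text{first column of } g) \bmod p$ induces a bijection $\Gamma \backslash \mathrm{SL}(3, \mathbb{Z}) \cong \mathbb{P}^2(\mathbb{Z}/p\mathbb{Z})$, since $\Gamma$ is exactly the stabilizer of the projective class of $(1 \colon 0 \colon 0)$ modulo $p$. Dualizing, a class in $H_3(\Gamma, \mathbb{C})^\vee$ corresponds to a function $f \colon \mathbb{P}^2(\mathbb{Z}/p\mathbb{Z}) \to \mathbb{C}$ that is compatible with the Steinberg relations; column transposition, cyclic shift, and sign flip of the first column then produce precisely the identities in (i).

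The three-term relation (ii) should emerge from the order-$3$ element
\[
M = \begin{pmatrix} 0 & -1 & 0 \\ 1 & -1 & 0 \\ 0 & 0 & 1 \end{pmatrix} \in \mathrm{SL}(3, \mathbb{Z}),
\]
whose action on the column vector $(x, y, z)^\top$ cycles through the three classes $(x \colon y \colon z)$, $(-y \colon x-y \colon z)$, and $(y-x \colon -x \colon z)$. One checks that $M$ rotates the standard unimodular basis $[e_1, e_2, e_3]$ through a closed triangle in the modular-symbol complex, so the associated Steinberg relation translates, under the identification $\Gamma \backslash \mathrm{SL}(3, \mathbb{Z}) \cong \mathbb{P}^2(\mathbb{Z}/p\mathbb{Z})$, into the vanishing condition of (ii).

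The main obstacle will be establishing \emph{sufficiency} of these two relations---that $\Phi$ is not only well defined but an isomorphism. Necessity is a direct check from the Steinberg presentation. Sufficiency requires verifying that no further identifications appear upon projecting unimodular symbols to their first columns modulo $p$; equivalently, that the relations among modular symbols arising from $\Gamma$-equivalence are generated by those captured in (i) and (ii). I would handle this by constructing an explicit inverse: given $f \in W$, assemble a modular-symbol chain in the sharbly complex whose $\Gamma$-homology class is determined by $f$ and whose well-definedness uses only the two stated relations.
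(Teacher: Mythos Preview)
The paper does not prove this theorem. It is stated in the preliminaries section purely as a quotation of \cite{AGG84}*{Thm.~3.2, Prop.~3.12}, with no argument supplied; the paper's own contributions begin with Theorems~\ref{thm:A} and~\ref{thm:B}. So there is no proof here against which to compare your proposal.

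That said, your outline is the standard route and matches what \cite{AGG84} actually does: Borel--Serre duality identifies top-degree cohomology of $\Gamma$ with coinvariants of the Steinberg module, Ash's presentation describes $\mathrm{St}$ via unimodular symbols, and the bijection $\Gamma\backslash\mathrm{SL}(3,\mathbb{Z})\cong\mathbb{P}^2(\mathbb{Z}/p\mathbb{Z})$ turns the symbol relations into conditions (i)--(ii). Two remarks. First, your displayed chain $H_3(\Gamma,\mathbb{C})\cong H^0(\Gamma,\mathrm{St}\otimes\mathbb{C})\cong(\mathrm{St}\otimes\mathbb{C})_\Gamma$ conflates invariants with coinvariants; Borel--Serre duality gives $H^3(\Gamma,\mathbb{C})\cong(\mathrm{St}\otimes\mathbb{C})_\Gamma$, and $W$ then arises as the linear dual. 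Second, and more substantively, your proposal stops at the crucial step: you correctly flag sufficiency of the relations as the ``main obstacle'' but only say you \emph{would} construct an inverse. In \cite{AGG84} this is precisely where the work lies, and it is not a formality---one needs the full presentation of the Steinberg module (not just permutation and one three-term relation) and a check that passing to $\Gamma$-orbits introduces no further collapses. As written, your proposal is a correct roadmap rather than a proof.
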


\noindent Moreover, they show that the homology can be understood via an appropriate vector space decomposition into two subspaces.  The first subspace is related to the cuspidal cohomology $H^3_{\text{cusp}}(\Gamma, \mathbb{C})$ and the second is related to the space of weight two cusp forms $S_2(\Gamma_0(2, p))$.  To better understand each component and how they interact, they make the following definitions.

\begin{definition}[\cite{AGG84}*{Def.\ 3.10, Def.\ 3.13}] \label{def:1}
    Let $\Delta(p)$ be the subgroup of $\GL(2, \mathbb{Z})$ generated by $\Gamma_0(2, p)$ and the matrix
    \[s = \begin{pmatrix}
        -1 & 0 \\
        0 & 1
    \end{pmatrix}.\]
    Let $W_0(\Delta(p))$ denote the vector space of functions $f \colon \mathbb{P}^1(\mathbb{Z}/p\mathbb{Z}) \to \mathbb{C}$ that satisfy
    \begin{enumerate}[label=(\roman*)]
        \item $f(x \colon y) = f(-x \colon y) = -f(y \colon x)$;
        \item $f(x \colon y) + f(-y \colon x-y) + f(y-x \colon -x) = 0$;
        \item $f(1 \colon 0) = 0$.
    \end{enumerate}
\end{definition}

\begin{definition}[\cite{AGG84}*{Def.\ 3.15}] \label{def:2}
    Let $\alpha, \beta \colon W_0(\Delta(p)) \to W$ be the linear maps
    \[(\alpha f)(x \colon y \colon z) = f(x \colon y) + f(y \colon z) + f(z \colon x),\]
    and
    \[(\beta f)(x \colon y \colon z) =
    \begin{cases}
        0 & \text{ if } xyz \neq 0 \\
        f(x \colon y) & \text{ if } z = 0 \\
        f(y \colon z) & \text{ if } x = 0 \\
        f(z \colon x) & \text{ if } y = 0.
    \end{cases}
    \]
    Set $f(0 \colon 0) = 0$.
\end{definition}

\begin{definition}[\cite{AGG84}*{Def.\ 3.16}] \label{def:3}
Let $A, B \colon W \to W_0(\Delta(p))$ be linear maps such that
\begin{align*}
    (Ag)(x \colon y) & = \sum_{z \in (\mathbb{Z}/p\mathbb{Z})^*} g(x \colon y \colon z); \\
    (Bg)(x \colon y) & = g(x \colon y \colon 0).
\end{align*}

\end{definition}

\begin{theorem}[\cite{AGG84}*{Thm.\ 3.19}] \label{thm:Wnc}
    Let $\delta \colon W_0(\Delta(p))^2 \to W$ be the linear map $\delta(f_1, f_2) = \alpha(f_1) + \beta(f_2)$.  Then, $\delta$ is injective, and $\textsf{im}(\delta) = W^{\text{nc}}$.
\end{theorem}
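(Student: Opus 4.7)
The plan is to dispatch injectivity first via a direct evaluation argument exploiting the disjoint supports of $\alpha$ and $\beta$, and then attack the image claim by inverting $\delta$ stratum-by-stratum on $\mathbb{P}^2(\mathbb{Z}/p\mathbb{Z})$.

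For injectivity, the key observation is that $\beta(f_2)$ vanishes on the open stratum $\{xyz \neq 0\}$ by \Cref{def:2}. Hence $\alpha(f_1) + \beta(f_2) = 0$ restricted to that stratum produces the three-term relation
\[
f_1(x:y) + f_1(y:z) + f_1(z:x) = 0 \quad \text{whenever } xyz \neq 0.
\]
Setting $z = 1$, writing $g(t) := f_1(t:1)$, and using the skew-symmetry $f_1(x:y) = -f_1(y:x)$ from \Cref{def:1}(i) together with projective homogeneity to rewrite $f_1(x:y) = g(xy^{-1})$, this collapses to $g(ab) = g(a) + g(b)$. So $g$ is a group homomorphism from the finite group $(\mathbb{Z}/p\mathbb{Z})^*$ into the torsion-free group $(\mathbb{C}, +)$, forcing $g \equiv 0$. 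Combined with $f_1(1:0) = 0$ from \Cref{def:1}(iii), this gives $f_1 \equiv 0$. Feeding this back, $\beta(f_2) = 0$, and the identity $(\beta f_2)(x:y:0) = f_2(x:y)$ then yields $f_2 \equiv 0$.

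For the image claim, I would treat the two inclusions separately. The inclusion $\im(\delta) \subseteq W^{\text{nc}}$ should reduce to a mechanical verification that $\alpha(f)$ and $\beta(f)$ satisfy the invariance conditions of \Cref{thm:W} and lie in the non-cuspidal summand of the decomposition of $W$ alluded to in \Cref{sec:vectorspace}; this uses only axioms (i)--(iii) of $W_0(\Delta(p))$. The reverse inclusion $W^{\text{nc}} \subseteq \im(\delta)$ is the substantive half: given $g \in W^{\text{nc}}$, I would read off a candidate $f_2$ from the boundary values of $g$ (since only $\beta(f_2)$ contributes on $\{xyz = 0\}$) and then try to recover $f_1$ from the restriction of $g - \beta(f_2)$ to the open stratum by inverting the cyclic-sum structure of $\alpha$.

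The main obstacle I anticipate is verifying that the $f_1, f_2$ constructed this way actually satisfy the defining axioms of $W_0(\Delta(p))$, especially the three-term relation (ii) and the vanishing condition (iii), and that $\delta(f_1, f_2)$ reproduces $g$ on both strata simultaneously rather than just the one from which it was extracted. This amounts to showing that the $W$-symmetries of $g$ from \Cref{thm:W} descend correctly to the $\mathbb{P}^1$-level symmetries of its restrictions. A clean route may be to pair the explicit construction with a dimension count, identifying both $W^{\text{nc}}$ and $\im(\delta)$ with a common quotient of $W$ via the boundary maps $A$ and $B$ of \Cref{def:3}.
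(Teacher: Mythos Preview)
The paper does not supply its own proof of this statement: \Cref{thm:Wnc} is quoted verbatim from \cite{AGG84}*{Thm.\ 3.19} as background, so there is nothing in the present paper to compare your argument against line by line.  That said, let me comment on the proposal itself.

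Your injectivity argument is clean and correct.  The reduction to a homomorphism $g \colon (\mathbb{Z}/p\mathbb{Z})^* \to (\mathbb{C},+)$ via $g(t) = f_1(t:1)$ and the torsion-free target observation is exactly the right mechanism, and the mop-up for $f_2$ via $(\beta f_2)(x:y:0) = f_2(x:y)$ is immediate.

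Your sketch for the inclusion $W^{\text{nc}} \subseteq \im(\delta)$, however, contains a concrete error.  You write that you would extract $f_2$ first from the boundary stratum because ``only $\beta(f_2)$ contributes on $\{xyz = 0\}$.''  This is false: from \Cref{def:2} and conditions (i), (iii) of \Cref{def:1} one computes, for $xy \neq 0$,
\[
(\alpha f_1)(x:y:0) = f_1(x:y) + f_1(y:0) + f_1(0:x) = f_1(x:y),
\]
which is generically nonzero.  So on the boundary you only see the sum $f_1(x:y) + f_2(x:y)$, and setting $f_2(x:y) = g(x:y:0)$ forces $f_1 \equiv 0$, collapsing the construction to $\im(\beta)$ rather than all of $W^{\text{nc}}$.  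The extraction order must be reversed: recover $f_1$ first from the open stratum (where $\beta$ genuinely vanishes), for instance by an averaging trick inverting the cyclic sum, and only then read off $f_2$ from the boundary of $g - \alpha(f_1)$.  The paper's own proof of \Cref{thm:B} carries out precisely this corrected strategy via the maps $C$ and $D$, and would serve as a template.
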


In the preceding theorem, $W^\text{nc}$ denotes the preimage $\Phi^{-1}(H_3^{\text{nc}}(\Gamma, \mathbb{C}))$, where $H_3^{\text{nc}}(\Gamma, \mathbb{C})$ is the complement of the algebraic dual of $H^3_{\text{cusp}}(\Gamma, \mathbb{C})$ described in \cite{AGG84}*{Def.\ 3.4, Lem.\ 3.5}.  Finally, we present \cite{AGG84}*{Sum.\ 3.23}, which shows that the cuspidal cohomology, the space of interest, can be viewed as follows:

\begin{theorem}[\cite{AGG84}*{Sum.\ 3.23}]\label{thm:summary}
    The cuspidal cohomology $H^3_{\text{cusp}}(\Gamma, \mathbb{C})$ is isomorphic to the subspace $U \subseteq W$ consisting of all $f \in U$ satisfying the following properties:
\begin{enumerate}[label=(\roman*)]
    \item $f(x \colon y \colon z) = f(z \colon x \colon y) = f(-x \colon y \colon z) = -f(y \colon x \colon z)$;
    \item $f(x \colon y \colon z) + f(-y \colon x -y \colon z) + f(y -x \colon -x \colon z) = 0$;
    \item $f(x \colon y \colon 0) = 0$;
    \item $\sum_{z \in \mathbb{Z}/p\mathbb{Z}} f(x \colon y \colon z) = 0$.
\end{enumerate}
\end{theorem}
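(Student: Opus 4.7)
The plan is to identify $U$ as a linear-algebraic complement of $W^{\textup{nc}}$ inside $W$. Combined with \cref{thm:Wnc} and the definition of $W^{\textup{nc}}$, this yields the claim: since $\Phi$ is an isomorphism $W \xrightarrow{\sim} H_3(\Gamma, \mathbb{C})$ under which $W^{\textup{nc}}$ corresponds to $H_3^{\textup{nc}}(\Gamma, \mathbb{C})$---the complement in $H_3(\Gamma, \mathbb{C})$ of the algebraic dual of $H^3_{\textup{cusp}}(\Gamma, \mathbb{C})$---any vector-space complement of $W^{\textup{nc}}$ in $W$ is isomorphic, over $\mathbb{C}$, to $H^3_{\textup{cusp}}(\Gamma, \mathbb{C})$.

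First, I would rewrite conditions (iii) and (iv) in terms of the operators $A$ and $B$ of \cref{def:3}. Condition (iii), $f(x : y : 0) = 0$, is precisely $Bf = 0$. For (iv), observe that
\[
    \sum_{z \in \mathbb{Z}/p\mathbb{Z}} f(x : y : z) = f(x : y : 0) + \sum_{z \in (\mathbb{Z}/p\mathbb{Z})^*} f(x : y : z) = (Bf)(x : y) + (Af)(x : y),
\]
so, given (iii), condition (iv) is equivalent to $Af = 0$. Hence $U = W \cap \ker A \cap \ker B$.

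The heart of the argument is to verify that $W = U \oplus W^{\textup{nc}}$. Since $\delta$ is injective with image $W^{\textup{nc}}$ by \cref{thm:Wnc}, it suffices to prove that the composition
\[
    (A, B) \circ \delta : W_0(\Delta(p))^2 \longrightarrow W_0(\Delta(p))^2
\]
is an isomorphism; for then $(A, B)$ restricts to an isomorphism on $W^{\textup{nc}}$, and any $f \in W$ differs from a unique element of $W^{\textup{nc}}$ by an element of $\ker(A, B) = U$. Viewing this composition as a $2 \times 2$ block operator with entries $A\alpha$, $A\beta$, $B\alpha$, $B\beta$, I would compute each block directly from \cref{def:2} and \cref{def:3}. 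Careful casework on which of $x, y, z$ vanish, together with the relations defining $W_0(\Delta(p))$ (in particular $f(1 : 0) = 0$, the induced $f(0 : 1) = 0$, and the convention $f(0 : 0) = 0$), should give $B\alpha = B\beta = \textup{id}$. The relevant Schur complement is then $A\alpha - A\beta$, which on points with $xy \neq 0$ reduces to $(p - 1) \cdot f$ modulo two ``Eisenstein-type'' sums of the form $\sum_{w \neq 0} f(1 : w)$; these sums vanish because the antisymmetry $f(x : y) = -f(y : x)$ combined with the substitution $w \mapsto 1/w$ forces each to equal its own negative. Since $p - 1$ is invertible in $\mathbb{C}$, the Schur complement is invertible and hence so is $(A, B) \circ \delta$.

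The main obstacle is precisely this computation of $(A, B) \circ \delta$: the casework on vanishing coordinates is delicate, and the final cancellation requires all the defining relations of $W_0(\Delta(p))$. Once invertibility is established, the decomposition $W = U \oplus W^{\textup{nc}}$ is immediate, and the identification $U \cong H^3_{\textup{cusp}}(\Gamma, \mathbb{C})$ follows from the duality recalled above.
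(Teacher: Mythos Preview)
The paper does not prove \cref{thm:summary}; it is stated in the preliminaries as a result quoted from \cite{AGG84}*{Sum.\ 3.23}, and the surrounding material (\cref{def:1,def:2,def:3,thm:Wnc}) records exactly the ingredients \cite{AGG84} uses to establish it. Your proposal correctly reconstructs that argument: rewriting (iii)--(iv) as $Bf=0$, $Af=0$, and then proving $W=U\oplus W^{\textup{nc}}$ by showing the block operator $(A,B)\circ\delta$ is invertible via the Schur complement $A\alpha-A\beta=(p-1)\,\mathrm{id}$. Your computations of $B\alpha=B\beta=\mathrm{id}$ and the vanishing of the sums $\sum_{w\neq 0} f(1:w)$ are accurate, and the final identification $U\cong W/W^{\textup{nc}}\cong H_3/H_3^{\textup{nc}}\cong (H^3_{\textup{cusp}})^\ast\cong H^3_{\textup{cusp}}$ is the intended one.

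One small point of presentation: you assert that the two ``Eisenstein-type'' sums each vanish individually, which is true via $w\mapsto 1/w$ and antisymmetry; but it is marginally cleaner to note that their \emph{sum} vanishes directly from $f(1:w)+f(w:1)=0$, which is all you need. Also, to conclude $A\alpha-A\beta=(p-1)\,\mathrm{id}$ on all of $W_0(\Delta(p))$ (not just where $xy\neq 0$), you should record that both sides vanish at $(1:0)$ and $(0:1)$---a one-line check using $f(1:0)=0$. With those two remarks the sketch is complete and matches the route taken in \cite{AGG84}.
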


\noindent The results detailed in this section will provide a framework for understanding the vector space decomposition of $W$ in \cref{section:results}.

\subsection{Modular Symbols} \label{sec:mod_symbols}
We now briefly review the notion of modular symbols in the $\textsf{GL}(3)$ setting.  Let $Q$ be $3 \times 3$ rational matrix with nonzero rows.  The modular symbol $[Q]$ is an element of $H_1(T_3, \mathbb{Z})$, where $T_3$ is the Tits building for $\textsf{SL}(3, \mathbb{Q})$.  If $Q \in \textsf{SL}(3, \mathbb{Z})$, we say the modular symbol $[Q]$ is \textbf{unimodular}.

Modular symbols can be viewed more concretely as a collection of nonzero rational row vectors that enjoy the properties of \cite{AR79}*{Def.\ 2.2}.  Note however that we swap column vectors for row vectors and left action for right action.  These properties allow us to write any modular symbol as the finite sum of unimodular symbols.  Moreover, Ash and Rudolph \cite{AR79}*{Def.\ 3.1} show how we can view $[Q]$ as an element in $H_2(\mathcal{M}, \partial \mathcal{M} ; \mathbb{C})\simeq H^3(\Gamma, \mathbb{C})$.  This leads to \cref{thm:2.7}.

\begin{theorem}[\cite{AGG84}*{Prop.\ 3.24}] \label{thm:2.7}
    The intersection pairing
    \[\langle -, - \rangle \colon H_2(\mathcal{M}, \partial \mathcal{M}) \times H_3(\mathcal{M}) \to \mathbb{C}\]
    evaluates as
    \[\langle [Q], \Phi(f) \rangle = f(Q)\]
    for any $Q$ in $\textsf{SL}(3, \mathbb{Z})$ and $f \in W$.  Here, $f(Q)$ denotes $f$ evaluated at the first column of $Q$ viewed as a point in $\mathbb{P}^2(\mathbb{Z}/p\mathbb{Z})$.
\end{theorem}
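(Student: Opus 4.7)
The plan is to reduce this to the explicit construction of $\Phi$ from \cref{thm:W} combined with the geometric description of modular symbols from \cite{AR79}. The key observation is that $\Gamma = \Gamma_0(3,p)$ is defined by congruence conditions on the first column, so the map $Q \mapsto (\text{first column of } Q) \bmod p$ descends to a bijection $\Gamma \backslash \mathsf{SL}(3,\mathbb{Z}) \to \mathbb{P}^2(\mathbb{Z}/p\mathbb{Z})$. Under this bijection, $\Phi(f)$ should be realized as a weighted sum, over cosets $\Gamma \cdot Q'$, of fundamental 3-cycles attached to those cosets, with weight equal to the value of $f$ at the corresponding point of $\mathbb{P}^2(\mathbb{Z}/p\mathbb{Z})$.

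The argument would then proceed in three steps. First, I would unpack the construction behind \cite{AGG84}*{Thm.\ 3.2} to produce a chain-level representative of $\Phi(f)$ of the form $\sum_{[Q']} f(Q') \cdot \sigma_{Q'}$, where $[Q']$ ranges over $\Gamma \backslash \mathsf{SL}(3,\mathbb{Z})$ and $\sigma_{Q'}$ is the fundamental 3-cycle associated with that coset. Second, I would invoke the Ash--Rudolph description of $[Q]$ as the apartment-simplex spanned by the rows of $Q$ in the Tits building, pushed forward into $H_2(\mathcal{M}, \partial \mathcal{M})$. Third, I would compute the intersection pairing cycle-by-cycle: $[Q]$ and $\sigma_{Q'}$ meet transversally in a single point (with the correct sign) when $\Gamma \cdot Q = \Gamma \cdot Q'$, and are disjoint otherwise, so only the $Q' = Q$ term of the sum survives and contributes $f(Q)$.

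The main obstacle is the third step: performing the intersection calculation rigorously inside the Borel--Serre compactification of $X/\Gamma$, controlling orientations so the local intersection number is $+1$ rather than $-1$ (or some other nontrivial multiple), and checking that $[Q]$ is genuinely disjoint from the 3-cycles attached to other cosets. A companion consistency check is that the relations (i) and (ii) in \cref{thm:W} are precisely the identities needed for the formal expression $\sum f(Q') \sigma_{Q'}$ to descend to a well-defined cycle in $H_3(\Gamma, \mathbb{C})$, reflecting the $\mathsf{SO}(3)$-side symmetries that are being modded out and the orientation conventions used in defining $\Phi$.
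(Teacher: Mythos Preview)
The paper does not give its own proof of this statement: it is quoted verbatim as background from \cite{AGG84}*{Prop.\ 3.24} and immediately used without further justification. So there is nothing in the present paper to compare your proposal against. If you want to check your outline, the right source is Ash--Grayson--Green's original argument, together with the construction of $\Phi$ in \cite{AGG84}*{Thm.\ 3.2} and the modular-symbol machinery of \cite{AR79}.

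That said, your sketch is a plausible reconstruction of the \emph{idea}, but as written it is a plan rather than a proof, and you have correctly flagged the actual content as an ``obstacle.'' Step three---the transversal-intersection computation inside the Borel--Serre compactification, with orientation control---is where essentially all the work lives, and you have not carried it out. In particular, the claim that $[Q]$ and $\sigma_{Q'}$ are disjoint for $\Gamma Q \neq \Gamma Q'$ and meet in a single point with intersection number exactly $+1$ otherwise is not something one can assert by inspection; it depends on the precise cell structure used to build both $\Phi$ and the modular symbols, and on how boundary components are handled in the compactification. Your step one also presupposes that $\Phi(f)$ is literally represented by a sum $\sum f(Q')\sigma_{Q'}$ of explicit $3$-cycles indexed by cosets; whether the construction in \cite{AGG84} actually takes this form (as opposed to, say, a dual construction via cocycles or a spectral-sequence argument) is something you would need to verify from the source. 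Until those points are pinned down, the proposal remains a heuristic outline.
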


This theorem plays a central role in computing the action of Hecke operators on the cuspidal cohomology.  For any $A \in \textsf{GL}(3, \mathbb{Q})$, there is a Hecke operator
\[T_A \colon H^3(\Gamma, \mathbb{C}) \to H^3(\Gamma, \mathbb{C}).\]
Its adjoint operator $T_A^*$ acts on the dual space $H_3(\Gamma, \mathbb{C})$.  Hence, we can evaluate the action of the Hecke operator through the intersection pairing.  More precisely, from \cite{AGG84}*{\S\ 4, p.\ 426}, we have the equation
\[\langle [Q], T_A^* f \rangle = \sum \langle [Q_{i,j}], f \rangle,\]
where the $[Q_{i,j}]$ are unimodular symbols such that $[QB_i] = \sum [Q_{i,j}]$ for a fixed $B_i$ in the finite collection of single coset representatives that stem from the decomposition of the double coset $\Gamma A \Gamma$:
\[\Gamma A \Gamma = \coprod_{i=1}^k B_i \Gamma, \quad B_i \in \textsf{GL}(3, \mathbb{Q}).\]
\begin{remark}
    We note that in \cite{AGG84}*{\S\ 4, p.\ 425}, the double coset $\Gamma A \Gamma$ should be decomposed as a finite union of left cosets, as opposed to right cosets.
\end{remark}
We will focus our attention on the special Hecke operators $E_\ell = T_{A(\ell)}$ and $F_\ell = T_{B(\ell)}$, with $\ell \neq p$ prime and
\[
A(\ell) =
\begin{pmatrix}
    \ell & 0 & 0 \\
    0 & 1 & 0 \\
    0 & 0 & 1
\end{pmatrix}, \quad
B(\ell) =
\begin{pmatrix}
    \ell & 0 & 0 \\
    0 & \ell & 0 \\
    0 & 0 & 1
\end{pmatrix},
\]
as these specific operators generate the Hecke algebra acting on $H^3(\Gamma, \mathbb{C})$ by \cite{AGG84}*{Prop.\ 4.1}.

\section{Main Results} \label{section:results}

Ideally, one works directly on $H^3_{\text{cusp}}(\Gamma, \mathbb{C})$, as a nonzero class in the cuspidal cohomology corresponds with a cuspidal automorphic form on $\GL(3)$.  Towards this goal, Ash, Grayson, and Green \cite{AGG84}*{Lem.\ 3.5} describe an isomorphism between the algebraic dual of $H^3_{\text{cusp}}(\Gamma, \mathbb{C})$ and $H_3(\Gamma, \mathbb{C}) / H_3^{\text{nc}}(\Gamma, \mathbb{C})$ induced by the natural pairing between the homology and cohomology. They then translate the problem into the vector space framework described in \cref{sec:vectorspace} to work with the spaces computationally.

\cref{thm:summary} describes a complement to $W^\text{nc}$ in $W$, which we denote $U$. By \cite{AGG84}*{Cor.\ 3.21}, we know that the dimension of $U$ is equal to the dimension of $H^3_{\text{cusp}}(\Gamma, \mathbb{C})$. Moreover, one can show that $U$ and $W^\text{nc}$ are disjoint.  Hence, $W$ decomposes as $W = U \oplus W^\text{nc}.$
Thus, elements $f \in U$ give a collection of coset representatives for $W/W^\text{nc}$.  Since $W^\text{nc}$ is Hecke invariant by \cite{AGG84}*{Rem.\ 3.22}, the method for computing Hecke operators on $W$ described in \cref{sec:mod_symbols} can be applied to computation on the quotient space $W/W^\text{nc}$, with minor adjustments.

In particular, the authors explain in \cite{AGG84}*{\S\ 6} that in order to compute the Hecke operators on the quotient space $W/W^\text{nc}$, one would need an appropriate collection of elements in the dual of $W$ that span the annihilator of $W^\text{nc}$.  They go on to note that finding a collection of individual unimodular symbols $[Q]$ that spans this annihilator is not possible, and consequently work with an alternate decomposition of $W$.  They show that $W = W' \oplus \im(\beta)$,
where $W'$ is the subspace of $W$ consisting of $f$'s which satisfy conditions (i)-(iii) of \cref{thm:summary} and $\im(\beta)$ is as defined in \cref{def:2}.

\begin{remark}
    We will use $\im(\beta)$ to denote the subspace of $W$ referred to as $\frac{1}{2}W^{\text{nc}}$ in \cite{AGG84}.  We opt for this notation to emphasize the role of the maps $\alpha$ and $\beta$ in our construction.
\end{remark}

We were able to construct a spanning set of operators in the dual of $W$ for the annihilator of $W^\text{nc}$, and hence, compute Hecke operators directly on the cuspidal cohomology.  The spanning set, presented in \cref{thm:A,thm:B} below, consists of linear combinations of the unimodular symbols from \cite{AGG84}*{\S\ 6, p.\ 431} that span the annihilator of $\im(\beta)$.  To motivate our construction, we briefly examine this collection of unimodular symbols, which contains unimodular symbols of the form:
\[
    [Q_{x,y}] = \begin{pmatrix}
        1 & 0 & 0 \\
        x & 1 & 0 \\
        y & 0 & 1
    \end{pmatrix},
    \quad x, y \in \mathbb{Z}/p\mathbb{Z}, \ xy \neq 0 \pmod p.
\]
We observe that such $[Q_{x, y}] \in \textsf{Ann}\left(\im(\beta)\right)$ because $\beta$ is supported on points $(x \colon y \colon z)$ with $xyz = 0$.  We leverage this idea instead for the annihilator of $W^\text{nc} = \mathsf{im}(\delta) = \mathsf{im}(\alpha) + \mathsf{im}(\beta)$ (see \cref{thm:Wnc}).

\begin{definition}\label{def:Rxyz}
    For $xyz \neq 0 \pmod p$, let $R_{x, y, z}$ denote the operator given by the following linear combination of unimodular symbols:
    \[
        R_{x, y, z} = [Q_{x,y}] + [Q_{y,z}] + [Q_{z,x}] - [Q_{\frac{y}{x},\frac{z}{x}}].
    \]
\end{definition}

\begin{theorem}\label{thm:A}
    With $R_{x,y,z}$ as above, $R_{x,y,z} \in \textsf{Ann}(W^{\text{nc}})$.
\end{theorem}

\begin{proof}
      We start by noting that an arbitrary $R_{x,y,z}$ is constructed as a sum of modular symbols that vanish on $\mathsf{im}(\beta)$; hence, it too vanishes on $\mathsf{im}(\beta)$.  Further, for $f = \alpha f' \in \mathsf{im}(\alpha)$, we invoke \cref{def:2} and observe that
    \begin{align*}
        (\alpha f')(R_{x,y,z}) & = (\alpha f')(Q_{x,y}) + (\alpha f')(Q_{y,z}) + (\alpha f')(Q_{z,x}) - (\alpha f')(Q_{\frac{y}{x},\frac{z}{x}}) \\
        & = \alpha f'(1 \colon x \colon y) + \alpha f'(1 \colon y \colon z) + \alpha f'(1 \colon z \colon x) - \alpha f'\left(1 \colon \frac{y}{x} \colon \frac{z}{x}\right) \\
        & =  \alpha f'(1 \colon x \colon y) + \alpha f'(1 \colon y \colon z) + \alpha f'(1 \colon z \colon x) - \alpha f'(x \colon y \colon z) \\
        & = f'(1 \colon x) + f'(x \colon y) + f'(y \colon 1) + f'(1 \colon y) + f'(y \colon z) + f'(z \colon 1) \\ & \hspace{2em} + f'(1 \colon z) + f'(z \colon x) + f'(x \colon 1) - (f'(x \colon y) + f'(y \colon z) + f'(z \colon x)) \\
        & = ( f'(1 \colon x) + f'(x \colon 1) ) + ( f'(1 \colon y) + f'(y \colon 1) ) + ( f'(1 \colon z) + f'(z \colon 1) ) \\ & \hspace{2em} + (f'(x \colon y) + f'(y \colon z) + f'(z \colon x)) - (f'(x \colon y) + f'(y \colon z) + f'(z \colon x)) \\
        & = 0
    \end{align*}
    The cancellations in the final step stem from the properties that $f' \in W_0(\Delta(p))$ enjoys from \cref{def:1}.
\end{proof}

We now turn our attention to showing that the collection $\{R_{x,y,z}\}$ spans the annihilator of $W^{\text{nc}}$ in the dual of $W$.  Let $V$ denote the span of this collection.  We will show that $V = \textsf{Ann}(W^\text{nc})$ by proving the equivalent statement $\textsf{Ann}(V) = W^{\text{nc}}$, where $\textsf{Ann}(V)$ denotes the annihilator of $V$ in $W$.  With this goal in mind, we make the following definitions akin to \cref{def:3}.

\begin{definition}
    Let $C, D \colon \textsf{Ann}(V) \to W_0(\Delta(p))$ be the maps such that
    \begin{align*}
        (Cf)(x \colon y) &= \frac{\sum_{1 \leq \lambda \leq p-1} f(1 \colon \lambda x \colon \lambda y)}{p-1}; \\
        (Df)(x \colon y) &= -(Cf)(x \colon y) + (Bf)(x \colon y).
    \end{align*}
    Here, $B$ is the linear map given in \cref{def:3}.
\end{definition}

We note that these maps are well-defined.  Moreover, since the image of $B$ lies in $W_0(\Delta(p))$, we see that the image of $D$ lies in $W_0(\Delta(p))$ so long as the image of $C$ does.  We leave it to the reader to verify this claim by confirming that all the conditions of \cref{def:1} hold for $Cf$.  

\begin{theorem}\label{thm:B}
    Let $V$ be the span of $\{R_{x,y,z}\}$ with $R_{x,y,z}$ as in \cref{def:Rxyz}.  Then, $V = \textsf{Ann}(W^{\text{nc}})$.
\end{theorem}

\begin{proof}
    As previously stated, we will show that $V = \textsf{Ann}(W^{\text{nc}})$ by proving the equivalent statement $\textsf{Ann}(V) = W^{\text{nc}}$.  By \cref{thm:A}, $W^{\text{nc}} \subseteq \textsf{Ann}(V)$.  To prove the reverse containment, take $f \in \textsf{Ann}(V)$.  To show $f \in W^\text{nc} \simeq \mathsf{im}(\delta)$, we must show that $f = f_1 + f_2$, where $f_1 \in \mathsf{im}(\alpha)$ and $f_2 \in \mathsf{im}(\beta)$.

    \begin{samepage}
    If such an $f_1$ and $f_2$ exist, then, by \cref{def:2}, for $(x \colon y \colon z) \in \mathbb{P}^2(\mathbb{Z}/p\mathbb{Z})$ with $xyz \neq 0 \pmod p$, $f_2$ vanishes and $f$ is determined completely by $f_1$. Thus, we can define $f_1$ by investigating $f$ evaluated at these specific points.  Let $xyz \neq 0 \pmod p$.  Then, we note the following:
    \begin{align*}
        f(x \colon y \colon z) & = \frac{(p-1)(f(x \colon y \colon z))}{p-1} \\
        & = \frac{\sum_{1 \leq \lambda \leq p-1} f(\lambda x \colon \lambda y \colon \lambda z)}{p-1} \\
        & = \frac{\sum_{1 \leq \lambda \leq p-1} f(1 \colon \lambda x \colon \lambda y)}{p-1} + \frac{\sum_{1 \leq \lambda \leq p-1} f(1 \colon \lambda y \colon \lambda z)}{p-1} \\
        & \hspace{2em} + \frac{\sum_{1 \leq \lambda \leq p-1} f(1 \colon \lambda z \colon \lambda x)}{p-1} \\
        & = (Cf)(x \colon y) + (Cf)(y \colon z) + (Cf)(z \colon x) \\
        & = (\alpha(Cf))(x \colon y \colon z)
    \end{align*}
    \end{samepage}
    The expansion in the third step is possible since $f \in \textsf{Ann}(V)$ and therefore,
    \[f(R_{x,y,z}) = f(Q_{x,y}) + f(Q_{y, z}) + f(Q_{z,x}) - f(Q_{\frac{y}{x},\frac{z}{x}}) = 0,\]
    which implies
    \[f(x \colon y \colon z) = f(1 \colon x \colon y) + f(1 \colon y \colon z) + f(1 \colon z \colon x).\]
    So, we will set $f_1 = \alpha(Cf)$.

    To define $f_2$, we now check the case when $xyz = 0 \pmod p$.  Without loss of generality, we let $z = 0$ because if any other coordinate is zero, the evaluation of $f$ at that point is equal to $f(x \colon y \colon 0)$ by property (i) of \cref{thm:W}. Therefore,
    \begin{align*}
        f(x \colon y \colon 0) & = (Bf)(x \colon y) \\
        & = (Cf)(x \colon y) - (Cf)(x \colon y) + (Bf)(x \colon y) \\
        & = (Cf)(x \colon y) + (Df)(x \colon y) \\
        & = (\alpha(Cf))(x \colon y \colon 0) + (\beta(Df))(x \colon y \colon 0).
    \end{align*}
    Set $f_2 = \beta(Df)$.  Hence, we have found $f_1 \in \mathsf{im}(\alpha)$ and $f_2 \in \mathsf{im}(\beta)$, as desired.
\end{proof}

\section{Computational Method} \label{sec:comp}

The code for the computations described in the subsequent section has implementations in both \textsc{SageMath} \cite{Sage} and \textsc{Magma} \cite{Magma}.  The scripts and additional data can be found in the GitHub repository \cite{Code}. The computations were performed on Wesleyan University's High Performance Compute Cluster.

\subsection{Computation of Dimension of Cuspidal Cohomology} \label{sec:comp1} To compute the dimension of $H^3_{\text{cusp}}(\Gamma, \mathbb{C})$, we first constructed a matrix $M$ whose kernel is the vector space $U$.  In order to build $M$, we note that one can view a function $f \in U$ as the vector of its values, where components are indexed by points in $\mathbb{P}^2(\mathbb{Z}/p\mathbb{Z})$.  Thus, ranging over points in $\mathbb{P}^2(\mathbb{Z}/p\mathbb{Z})$, we can construct rows of this matrix using conditions (i)-(iv) from \cref{thm:summary}.

For computational efficiency, $M$ was built over the finite field $\mathbb{F}_q$ for the prime $q = 12379$.  The rank was then computed using either the built-in \texttt{LinBox} implementation in \textsc{SageMath} or the generic \texttt{Rank} function in \textsc{Magma}, depending on the chosen script.  From the rank calculation, we could next find the dimension of the kernel, i.e.\ the dimension of $U$, which is equal to the dimension of $H^3_{\text{cusp}}(\Gamma, \mathbb{F}_q)$ by \cite{AGG84}*{Cor.\ 3.21}.  We computed the dimension of $H^3_{\text{cusp}}(\Gamma, \mathbb{F}_q)$ for all primes $p$ less than 3500 using this method.

If $\mathsf{dim}(H^3_{\text{cusp}}(\Gamma, \mathbb{F}_q))$ was zero following the initial rank calculation, we could conclude immediately that $\mathsf{dim}(H^3_{\text{cusp}}(\Gamma, \mathbb{C}))$ was also zero.  If $\mathsf{dim}(H^3_{\text{cusp}}(\Gamma, \mathbb{F}_q))$ was nonzero, we then rebuilt $M$ over $\mathbb{F}_q$ for $q = 31991$.  We calculated the dimension over this second finite field to further support our findings.  Besides the values $p \in \{53, 61, 79, 89, 223\}$ previously found in \cite{AGG84} and \cite{vGvdKTV97}, $\mathsf{dim}(H^3_{\text{cusp}}(\Gamma, \mathbb{F}_q))$ was nonzero only for $p \in \{521, 953, 1289, 1433, 1913, 2089, 2833\}$.  For all $p$ listed above, $\mathsf{dim}(H^3_{\text{cusp}}(\Gamma, \mathbb{F}_q)) = 2$ for both $q = 12379$ and $q = 31991$.

We opted to compute $\mathsf{dim}(H^3_{\text{cusp}}(\Gamma, \mathbb{F}_q))$ for two main reasons.  First, we wanted to leverage the high-efficiency algorithms from \texttt{LinBox}, which are only implemented over the integers and finite fields.  Second, we wished to avoid any floating-point error that would arise had we computed over $\mathbb{C}$ directly.  In \cite{AGM10}*{\S\ 2, p.\ 1814}, the authors explain that using this approach is reasonable because for large enough $q$, we expect with high likelihood that $\mathsf{dim}(H^3_{\text{cusp}}(\Gamma, \mathbb{F}_q))$ and $\mathsf{dim}(H^3_{\text{cusp}}(\Gamma, \mathbb{C}))$ are equal.  Moreover, the Hecke computations work in a compatible way.

In order to compute the Hecke operators, as described in the next section, we needed an explicit basis for $W/W^\text{nc}$.  By construction, we could use the basis for the kernel of $M$.  For $p \in \{ 521, 953, 1289, 1433, 1913\}$, we could compute this basis; however, for $p = 2089,\ 2833$, the matrices were too large for effective computation. As a result, we were unable to produce bases for these levels.

\subsection{Computation of Hecke Operators} With the bases for $W/W^\text{nc}$ in hand, we now turn our attention to the computation of the Hecke operators $E_\ell$ and $F_\ell$, as defined at the end of $\cref{sec:mod_symbols}$.  Recall, these specific operators generate the entire Hecke algebra acting on $H^3(\Gamma, \mathbb{C})$.  In order to compute the action of $E_\ell$ and $F_\ell$, we need a choice of single coset representatives $B_i$ for $A(\ell)$ and $B(\ell)$ respectively; we used the same representatives found in \cite{AGG84}*{\S\ 6, pp.\ 430-431}.

We then implemented the algorithm described in \cite{vGvdKTV97}*{\S\ 2.10} to reduce the linear combination
\[R_{x, y, z}B_i = [Q_{x,y}B_i] + [Q_{y,z}B_i] + [Q_{z,x}B_i] - [Q_{\frac{y}{x},\frac{z}{x}}B_i] \]
into a sum of unimodular symbols by reducing each modular symbol in the linear combination.  Having shown that the collection $\{R_{x, y, z}\}$ spans the annihilator of $W^{\text{nc}}$ in \cref{thm:B}, we can directly compute the action of Hecke operators on $W/W^\text{nc}$, which corresponds to $H^3_{\text{cusp}}(\Gamma, \mathbb{F}_q)$.

By comparison, in \cite{AGG84}*{\S\ 6, p.\ 432}, the authors worked on $W/\im(\beta)$ and therefore needed to filter out additional, non-cuspidal eigenvalues on $W^{\text{nc}} / \im(\beta)$ for $E_\ell$ and $F_\ell$.  This filtering process becomes more complicated as the dimension of $W / \im(\beta)$ grows because the number of non-cuspidal eigenvalues increases along with the level, as seen in \cref{table:1}.  In contrast, our method does not pick up any of these extra eigenvalues, and immediately gives the desired cuspidal data.

\begin{table}
    \small
    \caption{Prime level for all $p < 3500$ at which nonzero cuspidal classes appear and the corresponding dimensions of $W / \im(\beta)$ and $W / W^{\text{nc}}$.}
    \label{table:1}
    \arraycolsep=1.7em
    \def\arraystretch{1.2}
    $\begin{array}{c c c}
        \toprule
        p & \textsf{dim}(W / \im(\beta)) & \textsf{dim}(W / W^{\text{nc}}) \\
        \midrule
        53 & 6 & 2 \\
        61 & 6 & 2 \\
        79 & 8 & 2 \\
        89 & 9 & 2 \\
        223 & 20 & 2 \\
        521 & 45 & 2 \\
        953 & 81 & 2 \\
        1289 & 109 & 2 \\
        1433 & 121 & 2 \\
        1913 & 161 & 2 \\
        2089 & 175 & 2 \\
        2833 & 237 & 2 \\
        \bottomrule
    \end{array}$
\end{table}

Since we are working directly on $W/W^\text{nc}$, our computation gives the characteristic polynomial of $E_\ell$ (resp.\ $F_\ell$) on $H^3_{\text{cusp}}(\Gamma, \mathbb{F}_q)$.  However, recall that we want to understand the action of these operators on $H^3_{\text{cusp}}(\Gamma, \mathbb{C})$.  A consequence of working over $\mathbb{F}_q$ is that the characteristic polynomial returned by our calculations is the reduction modulo $q$ of the characteristic polynomial of $E_\ell$ (resp.\ $F_\ell$) on $H^3_{\text{cusp}}(\Gamma, \mathbb{C})$.  We let $\varphi_\ell$ denote the characteristic polynomial on $H^3_{\text{cusp}}(\Gamma, \mathbb{C})$ and $\varphi_\ell^{\mathrm{red}}$ denote its reduction modulo $q$.  Our goal is to find $\varphi_\ell$, as its roots, $e_\ell$ and complex conjugate $\overline{e_\ell}$ (resp.\ $f_\ell$ and $\overline{f_\ell}$), are central in computing local factors of the $L$-function of the associated cuspidal automorphic form.

To recover $\varphi_\ell$ from $\varphi_\ell^{\mathrm{red}}$, we leverage Ramanujan's conjecture, which bounds the absolute value of $e_\ell$ (resp. $f_\ell$) by $3\ell$.  In turn, this gives bounds on the coefficients of $\varphi_\ell$, which determine $\varphi_\ell$ explicitly when $\ell < \frac{\sqrt{q}}{3}$.  We can then use these $\varphi_\ell$ for small $\ell$ to determine the splitting field generated by the eigenvalues $e_\ell$ (resp.\ $f_\ell$), which we know to be either totally real or CM by \cite{APT91}*{Lem.\ 1.3}.  With this knowledge of the splitting field and the bounds given by Ramanujan's conjecture, we can find $\varphi_\ell$ for larger $\ell$.

For the five levels on which we could perform computations of Hecke operators, i.e.\ the levels from \cref{sec:comp1} whose bases we were able to compute, \cref{table:2} reports the eigenvalues $e_\ell$ of $E_\ell$ for a fixed common eigenvector, the corresponding characteristic polynomials $\varphi_\ell$ of $E_\ell$ on $H^3_{\text{cusp}}(\Gamma, \mathbb{C})$, and the field generated by the eigenvalues.  We note that in order to determine which eigenvalue coincides with the fixed common eigenvector, $\varphi_\ell^{\mathrm{red}}$ cannot be irreducible over $\mathbb{F}_q$.  To ensure this condition was met for all $\ell$ tested, $q = 12379$ was used for levels $521$ and $953$, and $q = 13001$ was used for the remaining levels.  Both choices of $q$ allowed us to determine $\varphi_\ell$ explicitly for $\ell \leq 37$.  We include additional calculations for $37 < \ell < 50$ in \cref{table:2} to illustrate the strategy of finding $\varphi_\ell$ from $\varphi_\ell^{\mathrm{red}}$ for $\ell \geq \frac{\sqrt{q}}{3}$.  (The data available in \cite{Code} contain further results for $50 < \ell < 150$.)

\begin{table}
    \small
    \caption{Hecke eigenvalues $e_\ell$ for Hecke operators $E_\ell$ for a fixed common eigenvector and corresponding characteristic polynomials $\varphi_\ell$ on $H^3_{\text{cusp}}(\Gamma, \mathbb{C})$.}
    \label{table:2}
    \arraycolsep=0.9em
    \def\arraystretch{1.2}
    $\begin{array}{ r | r l | r l }
        \toprule
        & \multicolumn{2}{c |}{p = 521} & \multicolumn{2}{c}{p = 953} \\
        & k \simeq \mathbb{Q}(\sqrt{-2}) \hfil &  \hfil \omega = \sqrt{-2} & k \simeq \mathbb{Q}(\sqrt{-2}) \hfil & \hfil \omega = \sqrt{-2} \\
        \midrule
        \ell & e_\ell \hfil & \hfil \varphi_\ell & e_\ell \hfil & \hfil \varphi_\ell \\
        \midrule
        2 & 1 & (T - 1)^2 & 1 & (T - 1)^2 \\
        3 & \omega - 1 & T^2 + 2T + 3 & -\omega - 1 & T^2 + 2T + 3 \\
        5 & 4\omega - 5 & T^2 + 10T + 57 & 1 & (T - 1)^2 \\
        7 & 3\omega  - 3 & T^2 + 6T + 27 & 1 & (T - 1)^2 \\
        11 & 2\omega  + 1 & T^2 - 2T + 9 & -3\omega + 7 & T^2 - 14T + 67 \\
        13 & -6\omega  + 1 & T^2 - 2T + 73 & -12\omega - 9 & T^2 + 18T + 369 \\
        17 & 12\omega  - 17 & T^2 + 34T + 577 & 8\omega + 1 & T^2 - 2T + 129 \\
        19 & -3\omega + 7 & T^2 - 14T + 67 & -3\omega + 11 & T^2 - 22T + 139 \\
        23 & -9\omega + 13 & T^2 - 26T + 331 & -13\omega - 11 & T^2 + 22T + 459 \\
        29 & -18\omega + 1 & T^2 - 2T + 649 & 8\omega + 7 & T^2 - 14T + 177 \\
        31 & -12\omega - 7 & T^2 + 14T + 337 & 33\omega - 23 & T^2 + 46T + 2707 \\
        37 & 18\omega + 9 & T^2 - 18T + 729 & -12\omega - 9 & T^2 + 18T + 369 \\
        41 & 32\omega - 29 & T^2 + 58T + 2889 & -12\omega + 37 & T^2 - 74T + 1657 \\
        43 & -9\omega - 33 & T^2 + 66T + 1251 & 15\omega + 3 & T^2 - 6T + 459 \\
        47 & -2\omega+  73 & T^2 - 146T + 5337 & 45\omega - 47 & T^2 + 94T + 6259 \\
        \bottomrule
    \end{array}$
\end{table}

\begin{table}
    \small
    \arraycolsep=0.9em
    \def\arraystretch{1.2}
    $\begin{array}{ r | r l | r l }
        \toprule
        & \multicolumn{2}{c |}{p = 1289} & \multicolumn{2}{c}{p = 1433} \\
        & k \simeq \mathbb{Q}(\sqrt{-1}) \hfil &  \hfil i = \sqrt{-1} & k \simeq \mathbb{Q}(\sqrt{-1}) \hfil & \hfil i = \sqrt{-1} \\
        \midrule
        \ell & e_\ell \hfil & \hfil \varphi_\ell & e_\ell \hfil & \hfil \varphi_\ell \\
        \midrule
        2 & 2i - 1 & T^2 + 2T + 5 & 2i - 1 & T^2 + 2T + 5 \\
        3 & 2i + 1 & T^2 - 2T + 5 & 2i + 1 & T^2 - 2T + 5 \\
        5 & 2i + 2 & T^2 - 4T + 8 & -4i - 1 & T^2 + 2T + 17 \\
        7 & -2i + 5 & T^2 - 10T + 29 & 5i - 3 & T^2 + 6T + 34 \\
        11 & -5i - 5 & T^2 + 10T + 50 & 10i + 1 & T^2 - 2T + 101 \\
        13 & 4i - 11 & T^2 + 22T + 137 & 2i + 16 & T^2 - 32T + 260 \\
        17 & -4i + 23 & T^2 - 46T + 545 & -24i - 17 & T^2 + 34T + 865 \\
        19 & 4i + 1 & T^2 - 2T + 17 & -3i + 11 & T^2 - 22T + 130 \\
        23 & -18i + 1 & T^2 - 2T + 325 & 17i - 19 & T^2 + 38T + 650 \\
        29 & 34i - 4 & T^2 + 8T + 1172 & 8i - 19 & T^2 + 38T + 425 \\
        31 & -20i + 25 & T^2 - 50T + 1025 & 20i - 11 & T^2 + 22T + 521 \\
        37 & 24i + 31 & T^2 - 62T + 1537 & -30i + 2 & T^2 - 4T + 904 \\
        41 & -20i - 40 & T^2 + 80T + 2000 & -20i + 32 & T^2 - 64T + 1424 \\
        43 & -39i + 7 & T^2 - 14T + 1570 & 22i - 39 & T^2 + 78T + 2005 \\
        47 & -29i - 7 & T^2 + 14T + 890 & 77i - 31 & T^2 + 62T + 6890 \\
        \bottomrule
    \end{array}$
\end{table}

\begin{table}
    \small
    \caption*{\textsc{Table 2 Continued}}
    \arraycolsep=0.9em
    \def\arraystretch{1.2}
    $\begin{array}{ r | r l }
        \toprule
        & \multicolumn{2}{c}{p = 1913} \\
        & k \simeq \mathbb{Q}(\sqrt{-1}) \hfil &  \hfil i = \sqrt{-1} \\
        \midrule
        \ell & e_\ell \hfil & \hfil \varphi_\ell \\
        \midrule
        2 & 2i - 1 & T^2 + 2T + 5 \\
        3 & -i - 1 & T^2 + 2T + 2 \\
        5 & -4i - 1 & T^2 + 2T + 17 \\
        7 & 4i + 1 & T^2 - 2T + 17 \\
        11 & -5i + 3 & T^2 - 6T + 34 \\
        13 & -16i - 11 & T^2 + 22T + 377 \\
        17 & 12 & (T - 12)^2 \\
        19 & -21i - 9 & T^2 + 18T + 522 \\
        23 & -14i - 3 & T^2 + 6T + 205 \\
        29 & 22i + 6 & T^2 - 12T + 520 \\
        31 & -10i + 1 & T^2 - 2T + 101 \\
        37 & -16i - 29 & T^2 + 58T + 1097 \\
        41 & 85 & (T - 85)^2 \\
        43 & -32i + 5 & T^2 - 10T + 1049 \\
        47 & -26i + 1 & T^2 - 2T + 677 \\
        \bottomrule
    \end{array}$
\end{table}

Now, let $k$ denote the splitting field generated by the eigenvalues.  At levels 521 and 953, $k \simeq \mathbb{Q}(\sqrt{-2})$, and at levels 1289, 1433, and 1913, $k \simeq \mathbb{Q}(\sqrt{-1})$.  We make the following observations in line with those of \cite{AGG84}*{\S\ 6, p.\ 434}.  First, we find that the ring spanned by the images of Hecke operators in $k$ is $\mathbb{Z} + \mathbb{Z}\sqrt{D}$, where $D$ denotes the negative square free rational integer such that $k \simeq \mathbb{Q}(\sqrt{D})$.  Second, we note that both in $\mathbb{Q}(\sqrt{-1})$ and in $\mathbb{Q}(\sqrt{-2})$,  the only prime that ramifies is $2$.  Then, as predicted, we see that the level $p$ is a square in $\mathbb{Z}_2$ for each $p \in \{521, 953, 1289, 1433, 1913\}$.

We conclude by noting that the dimension of the cuspidal cohomology is two for all levels at which nonzero cuspidal classes appear.  We know of no explanation as to why this dimension does not increase with $p$.  There is no guarantee that the dimension remains constant for higher level.

\section*{Acknowledgments}
The author thanks David Pollack for helpful discussions and insights during the preparation of the article.  The author also thanks Wesleyan University for computer time supported by the NSF under grant numbers CNS-0619508 and CNS-0959856.  Finally, the author thanks the referees for their constructive feedback.

\begin{bibdiv}
    \begin{biblist}

    \bib{AGG84}{article}{
          author={Ash, Avner},
          author={Grayson, Daniel},
          author={Green, Philip},
           title={Computations of cuspidal cohomology of congruence subgroups of {${\rm SL}(3,{\bf Z})$}},
            date={1984},
            ISSN={0022-314X,1096-1658},
         journal={J. Number Theory},
          volume={19},
          number={3},
           pages={412\ndash 436},
             url={https://doi.org/10.1016/0022-314X(84)90081-7},
          review={\MR{769792}},
    }

    \bib{AGM10}{article}{
          author={Ash, Avner},
          author={Gunnells, Paul~E.},
          author={McConnell, Mark},
           title={Cohomology of congruence subgroups of {${\rm SL}_4(\bf Z)$}. {III}},
            date={2010},
            ISSN={0025-5718,1088-6842},
         journal={Math. Comp.},
          volume={79},
          number={271},
           pages={1811\ndash 1831},
             url={https://doi.org/10.1090/S0025-5718-10-02331-8},
          review={\MR{2630015}},
    }

    \bib{AM92}{article}{
          author={Ash, Avner},
          author={McConnell, Mark},
           title={Experimental indications of three-dimensional {G}alois representations from the cohomology of {${\rm SL}(3,{\bf Z})$}},
            date={1992},
            ISSN={1058-6458,1944-950X},
         journal={Experiment. Math.},
          volume={1},
          number={3},
           pages={209\ndash 223},
             url={http://projecteuclid.org/euclid.em/1048622024},
          review={\MR{1203875}},
    }

    \bib{APT91}{article}{
       author={Ash, Avner},
       author={Pinch, Richard},
       author={Taylor, Richard},
       title={An $\widehat{A_4}$ extension of ${\bf Q}$ attached to a
       nonselfdual automorphic form on ${\rm GL}(3)$},
       journal={Math. Ann.},
       volume={291},
       date={1991},
       number={4},
       pages={753--766},
       issn={0025-5831},
       review={\MR{1135542}},
    }

    \bib{AR79}{article}{
          author={Ash, Avner},
          author={Rudolph, Lee},
           title={The modular symbol and continued fractions in higher dimensions},
            date={1979},
            ISSN={0020-9910,1432-1297},
         journal={Invent. Math.},
          volume={55},
          number={3},
           pages={241\ndash 250},
             url={https://doi.org/10.1007/BF01406842},
          review={\MR{553998}},
    }

    \bibitem[Magma]{Magma}
    W. Bosma, J. J. Cannon, C. Fieker, A. Steel (eds.), \textit{Handbook of Magma functions}, Version 2.28-14 (2024). \href{https://magma.maths.usyd.edu.au/magma/handbook/}{\tt https://magma.maths.usyd.edu.au/magma/handbook/}.

    \bib{LS82}{article}{
          author={Lee, Ronnie},
          author={Schwermer, Joachim},
           title={Cohomology of arithmetic subgroups of {${\rm SL}\sb{3}$} at infinity},
            date={1982},
            ISSN={0075-4102,1435-5345},
         journal={J. Reine Angew. Math.},
          volume={330},
           pages={100\ndash 131},
             url={https://doi.org/10.1515/crll.1982.330.100},
          review={\MR{641814}},
    }

    \bib{Code}{misc}{
          author={Porat, Zachary},
           title={GitHub repository for related scripts and data},
            note={Available at \href{https://github.com/zporat/cuspidal-cohomology}{\texttt{https://github.com/zporat/cuspidal-cohomology}} (updated 2025)},
            label={Por25}
    }

    \bib{Sage}{misc}{
          author={{The Sage Developers}},
           title={{S}age{M}ath, the {S}age {M}athematics {S}oftware {S}ystem},
            date={Version 10.4 (2024)},
            note={\href{https://www.sagemath.org}{\tt https://www.sagemath.org}},
            label={Sage}
    }

    \bib{vGT94}{article}{
          author={van Geemen, Bert},
          author={Top, Jaap},
           title={A non-selfdual automorphic representation of {${\rm GL}_3$} and a {G}alois representation},
            date={1994},
            ISSN={0020-9910,1432-1297},
         journal={Invent. Math.},
          volume={117},
          number={3},
           pages={391\ndash 401},
             url={https://doi.org/10.1007/BF01232250},
          review={\MR{1283724}},
    }

    \bib{vGvdKTV97}{article}{
          author={van Geemen, Bert},
          author={van~der Kallen, Wilberd},
          author={Top, Jaap},
          author={Verberkmoes, Alain},
           title={Hecke eigenforms in the cohomology of congruence subgroups of {${\rm SL}(3,{\bf Z})$}},
            date={1997},
            ISSN={1058-6458,1944-950X},
         journal={Experiment. Math.},
          volume={6},
          number={2},
           pages={163\ndash 174},
             url={http://projecteuclid.org/euclid.em/1047650002},
          review={\MR{1474576}},
    }
    \end{biblist}
\end{bibdiv}

\end{document}